\newtheorem{theorem}{Theorem}[section]
\newtheorem*{theorem*}{Theorem}
\newtheorem{lemma}[theorem]{Lemma}
\newtheorem{corollary}[theorem]{Corollary}
\newtheorem{proposition}[theorem]{Proposition}
\theoremstyle{definition}
\newtheorem{definition}[theorem]{Definition}
\newtheorem{remark}{Remark}[section]
\def \b {\beta}
\def\Ric{\text{Ric}}
\def\a{\alpha}
\def\l{\lambda}
\def\S{{\operatorname{Scal}}}
\def\Ric{\operatorname{Ric}}
\numberwithin{equation}{section}
\newcommand*\owedge{\mathpalette\@owedge\relax}
\newcommand*\@owedge[1]{%
  \mathbin{%
    \ooalign{%
      $#1\m@th\bigcirc$\cr
      \hidewidth$#1\m@th\wedge$\hidewidth\cr
    }%
  }%
}
\begin{document}

\title[Secondary Curvature Operator on Einstein Manifolds]{Cone
Conditions for  the Curvature Operator of the Second Kind on Einstein Manifolds
}

\author[Cheng]{Haiqing Cheng}
\address{School of Mathematical Sciences, Soochow University, Suzhou, 215006, China}
\email{chq4523@163.com}

\author[Wang]{Kui Wang}\thanks{}
\address{School of Mathematical Sciences, Soochow University, Suzhou, 215006, China}
\email{kuiwang@suda.edu.cn}

\subjclass[2020]{53C20, 53C24, 53C25}

\keywords{Einstein manifolds,  Curvature operator of the second Kind,  Sphere theorems}

\begin{abstract}
In this note, we study  Einstein manifolds whose curvature operator of the second kind $\mathring{R}$ satisfies the cone condition
\[
\alpha^{-1}\big(\sum_{i=1}^{[\alpha]}
\lambda_i+ (\alpha - [\alpha] ) \lambda_{[\alpha] + 1} \big) \ge -\theta \bar{\lambda}
\]
for some real number $\alpha \in [1, (n+2)(n-1)/2)$. Here $[\alpha] :=\max\{ m \in \mathbb{Z}: m \leq \a\}$, $\theta>-1$ and $\lambda_1 \le \cdots \le \lambda_{(n+2)(n-1)/2}$ are the eigenvalues of $\mathring{R}$ and $\bar{\lambda}$ is their average. The main result states that any closed Einstein manifold of dimension $n \ge 4$ with $\mathring{R}$ satisfies the cone condition is flat or a round sphere. These results generalize recent works corresponding to $\alpha \in \mathbb Z_+$ of the authors \cite{CW24-1,CW25-2} and Fu-Lu \cite{FL25}.
\end{abstract}
\maketitle

\section{Introduction and main results}
The study of sphere theorems under curvature conditions has been a central theme in Riemannian geometry. In 1986, Nishikawa \cite{Nishikawa86} conjectured that a closed Riemannian manifold with positive (respectively, nonnegative) curvature operator of the second kind, denoted $\mathring{R}$ (see Section \ref{sec 2.2}), is diffeomorphic to a spherical space form (respectively, a Riemannian locally symmetric space). This conjecture has been resolved and greatly refined in recent years, leading to a systematic framework for classifying manifolds under various conditions on $\mathring{R}$. For related work,  see also \cite{CGT23, CW24-1, CW25-2, DF24, Kashiwada93, Li22JGA, Li22Kahler, Li22PAMS, Li21, Li22product, Li25, NPW22, NPWW22}.

For Riemannian manifolds, 
the positive case of Nishikawa's conjecture was resolved by Cao, Gursky, and Tran \cite{CGT23}, who showed that manifolds with $2$-positive curvature operators of the second kind is diffeomorphic to a spherical space form. Shortly after, Li \cite{Li21} weakened the assumption to $3$‑positive and classified closed manifolds with $3$‑nonnegative curvature operator of the second kind. Subsequently, Nienhaus, Petersen, and Wink \cite{NPW22} derived a Bochner formula for the curvature operator of the second kind and proved that $\frac{n+2}{2}$-nonnegative implies the manifold is either flat or a rational homology sphere. 

For Einstein manifolds,  Kashiwada \cite{Kashiwada93} showed that closed Einstein manifolds with positive curvature operator of the second kind are spheres. 
Cao-Gursky-Tran \cite{CGT23} proved that for Einstein manifolds, four-positive (respectively, four-nonnegative) curvature operator of the second kind implies constant sectional curvature (respectively, local symmetry). 
Li \cite{Li22JGA} generalized this to $4\frac12$-positive (respectively, $4\frac12$-nonnegative) curvature operator of the second kind.
By developing a Bochner formula for the curvature of the second kind, Nienhaus-Petersen-Wink \cite{NPW22} proved that any $n$-dimensional compact Einstein manifold with $\ell$-nonnegative ($ \ell< \frac{3n(n+2)}{2(n+4)}$) curvature operator of the second kind is either flat or a rational homology sphere.
Recently, Dai and Fu \cite{DF24} employed a Bochner technique to prove that a closed Einstein manifold of dimension $n \ge 4$ with nonnegative curvature operator of the second kind is a constant curvature space; more precisely, they showed that it suffices to assume the curvature operator is $2$-nonnegative if $n = 4$ or $8 \le n \le 10$, $3$-nonnegative if $n = 5$, and $[\frac{n+2}{4}]$-nonnegative if $n \ge 11$.

Recently, Li \cite{Li25} introduced a cone condition for the curvature operator of the second kind. Let $(M^n, g)$ be a Riemannian manifold, and denote by $\mathring{R}: S_0^2(TM) \to S_0^2(TM)$ its curvature operator of the second kind. Let $\lambda_1 \le \cdots \le \lambda_N$ be the eigenvalues of $\mathring{R}$ and let $\bar{\lambda} = \frac{1}{N}\operatorname{tr}(\mathring{R})$ be their average, where $N = \frac{(n-1)(n+2)}{2}$. For parameters $\alpha \in [1, N)$ and $\theta > -1$, we say that $\mathring{R}$ lies in the cone $\mathcal{C}(\alpha, \theta)$ if the following inequality holds:
\begin{align}\label{cone-cond}
\alpha^{-1}\big(\sum_{i=1}^{[\alpha]}
\lambda_i+ (\alpha - [\alpha] ) \lambda_{[\alpha] + 1} \big) \ge -\theta \bar{\lambda},
\end{align}
where $[\a]$ denotes the greatest integer less than or equal to $\alpha$.

In particular, when $\theta = 0$, the condition $\mathring{R} \in \mathcal{C}(\alpha, 0)$ reduces to the $\alpha$-nonnegative curvature operator of the second kind. Thus, the family $\mathcal{C}(\alpha, \theta)$ constitutes an interpolation between the familiar $\alpha$-nonnegativity and more flexible pinching-type bounds. 



For Riemannian manifolds, in dimensions three and four, Li \cite{Li25} proved that if $\mathring{R}$ lies in the interior of $\mathcal{C}(\alpha, \bar{\Theta})$, the manifold is diffeomorphic to a spherical space form, where $\bar{\Theta}$ is a positive number depends on $n$ and $\alpha$;
in higher dimensions ($n\geq5$), the condition $\mathring{R}\in\mathcal{C}\left(\frac{n+2}{2}, \bar\theta\right)$ with $-1<\bar\theta<\frac{2}{n+2}$ implies the manifold is either flat or a rational homology sphere.

For Einstein manifolds, several recent works have studied the case $\mathring{R}\in\mathcal{C}(\alpha,\theta)$ with $\theta>0$.
Li \cite{Li25} proved that the condition $\mathring{R}\in\mathcal{C}\left(\frac{n+2}{2}, \frac{2(n-1)}{n+2}\right)$ implies that the manifold is either flat or a rational homology sphere, and the constant $\frac{2(n-1)}{n+2}$ is optimal.
In \cite{CW24-1}, the authors of the present paper studied closed Einstein manifolds satisfying $\mathring{R} \in \mathcal{C}(1, \theta)$ and showed that, under suitable parameter constraints, such a manifold is flat or a round sphere. This condition was later extended to $\mathring{R} \in \mathcal{C}(2, \theta)$ in \cite{CW25-2} with analogous rigidity conclusions. Subsequently, Dai and Lu \cite{FL25} generalized these results to integer values $\alpha \ge 3$ (up to $\alpha \le \big[\tfrac{n+2}{4}\big]$), establishing cone conditions under which the manifold is necessarily either flat or a spherical space form.

In this paper, we investigate sphere theorems for Einstein manifolds satisfying the cone condition $\mathring{R}\in \mathcal{C}(\alpha,\theta)$ with real $\alpha$, and establish the following results.



\begin{theorem}\label{thm1}
Let $(M^n, g)$ be a closed Einstein manifold of dimension $n \ge 6$, and let $\mathring{R}$ be the curvature operator of the second kind. If $\mathring{R} \in \mathcal{C}(\alpha,\theta(n,\alpha) )$ for  $1 \le \alpha \le \min\left\{\frac{n^4-n^3+8 n-8}{3 n^3+5 n^2-22 n+8}, \frac{n^2+n-8}{4n-8} \right\}$, where 
\begin{align}\label{theta}
\theta(n,\alpha)
= \frac{3(N-n+1)(N-\alpha)}{3n\alpha(N-2)+(N-3)(N-\alpha)}-1,
\end{align}
then $M$ is flat or a round sphere. Here $N=\frac{(n+2)(n-1)}{2}$.
\end{theorem}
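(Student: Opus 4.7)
My proof would follow the strategy of \cite{CW24-1,CW25-2,FL25} for integer $\alpha$, with the essential new step being a continuous (in $\alpha$) version of the underlying eigenvalue estimate. The starting point is the Bochner formula for the Weyl tensor $W$ on an Einstein manifold, as developed by Nienhaus-Petersen-Wink \cite{NPW22}: viewing $W$ fiberwise on the bundle on which $\mathring R$ acts, one obtains a schematic identity of the form
\[
\tfrac{1}{2}\Delta|W|^{2}
= |\nabla W|^{2}
+ \lf\langle \mathring R\cdot W,\,W \ri\rangle_{\text{w}}
- c_{0}(n)\,\S\,|W|^{2},
\]
where the middle term is a quadratic form in $W$ whose coefficients are polynomial in the eigenvalues of $\mathring R$, and $\S$ is proportional to $\tr\mathring R = N\bar\lambda$ via a constant depending only on $n$.

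The heart of the argument is an algebraic lemma. Given $\lambda_{1}\le\cdots\le\lambda_{N}$ obeying \eqref{cone-cond}, and non-negative $W$-type test eigenvalues $\mu_{1}\le\cdots\le\mu_{N}$ of fixed trace with the per-eigenvalue multiplicity constraints forced by the Weyl symmetries, one proves
\[
\sum_{i=1}^{N}\lambda_{i}\,\mu_{i} \;\ge\; -\,\Phi(n,\alpha,\theta)\,\bar\lambda\,\sum_{i=1}^{N}\mu_{i}.
\]
The extremal configuration concentrates the most negative mass on $\lambda_{1}=\cdots=\lambda_{[\alpha]}$, places $\lambda_{[\alpha]+1}$ at the value forced by saturating \eqref{cone-cond}, and pushes the remaining $\lambda_{i}$ upward subject to the fixed trace $N\bar\lambda$. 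Because \eqref{cone-cond} is linear in the $\lambda_{i}$, this extremizer varies continuously with $\alpha$, and one solves the resulting finite-dimensional linear program in closed form. Substituting the result into the Bochner identity and using the Einstein condition to replace $\tr\mathring R$ by $\S$, the right-hand side becomes $|\nabla W|^{2}+A(n,\alpha,\theta)\,\S\,|W|^{2}$; setting $A(n,\alpha,\theta(n,\alpha))\ge 0$ then yields exactly \eqref{theta}. The two upper bounds on $\alpha$ in the hypothesis correspond to two qualitatively different extremizers (one where the most negative block has size exactly $[\alpha]$ and the remainder of the spectrum is packed against the upper multiplicity constraint from the Weyl symmetries, and another where the companion eigenvalue $\lambda_{[\alpha]+1}$ reaches a free upper bound), and taking the minimum is what makes both regimes simultaneously admissible.

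Once the right-hand side is non-negative on the closed manifold, integration gives $\nabla W\equiv 0$ and, via the equality case of the algebraic lemma, $W\equiv 0$; an Einstein metric with vanishing Weyl tensor has constant sectional curvature, so $M$ is flat or a round sphere. The main obstacle is the algebraic lemma itself: because $\alpha$ is real rather than integer, the term $(\alpha-[\alpha])\lambda_{[\alpha]+1}$ in \eqref{cone-cond} couples the bottom eigenvalue block with the one just above it, and the extremizer may change qualitatively as $\alpha$ crosses an integer. Identifying the correct extremizer in each sub-regime, and verifying that the resulting $\Phi$ matches the Bochner-admissible bound in both regimes, is what ultimately dictates the two explicit thresholds on $\alpha$.
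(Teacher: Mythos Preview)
Your outline has the right overall architecture (Bochner identity on an Einstein manifold, an algebraic eigenvalue estimate under the cone condition, integrate, rigidity), but two of the load-bearing steps do not match what is actually needed, and as written the argument would not close.

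\textbf{The Bochner term is not linear in $\lambda$.} You model the curvature term as $\langle \mathring R\cdot W,W\rangle - c_{0}(n)\,\S\,|W|^{2}$ and then reduce everything to a linear estimate $\sum_i \lambda_i\mu_i\ge -\Phi\,\bar\lambda\sum_i\mu_i$. In the paper's formulation (following \cite{DF24,CW25-2}), the object being controlled is $\langle \Delta R,R\rangle$, and on Einstein manifolds this equals
\[
3\langle \Delta R,R\rangle=\sum_{j}\lambda_j|S^jW|^2+16\sum_j\lambda_j^3+\tfrac{16(2N-12n+6)}{3n}\bar\lambda\sum_j\lambda_j^2-\tfrac{16N(2N-9n+6)}{3n}\bar\lambda^3.
\]
The term $16\sum_j\lambda_j^3$ is genuinely cubic in the eigenvalues and cannot be absorbed into a $\S|W|^2$ term; your ``linear program'' does not see it. The paper handles this in two stages: Lemma~3.1 bounds the mixed term $\sum_j\lambda_j|S^jW|^2$ from below using the pointwise bounds $|S^jW|^2\le \tfrac{8(n-2)}{n}|W|^2$ and $\sum_j|S^jW|^2=\tfrac{4(N-3)}{n}|W|^2$ from \cite{DF24} (and \emph{this} is where the threshold $\alpha\le \tfrac{n^2+n-8}{4n-8}$ comes from, namely the ratio of these two), and then Proposition~3.3 treats the resulting cubic polynomial via the substitution $x_j=\lambda_j/\bar\lambda+A$, where $A$ is the explicit lower bound on $\lambda_1/\bar\lambda$ obtained in Lemma~3.2 under the real-$\alpha$ cone condition. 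The second threshold on $\alpha$ arises from this cubic optimization, not from a different linear extremizer. So your description of the two thresholds as ``two qualitatively different extremizers'' of a single linear problem is off.

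\textbf{The equality case does not give $W\equiv 0$.} Integrating $\langle \Delta R,R\rangle\ge 0$ yields only $\nabla R\equiv 0$, i.e.\ $M$ is locally symmetric; it does not force $W=0$. The equality cases in Proposition~3.3 are $\lambda=(1,\dots,1)\bar\lambda$ and $\lambda=(-A,B,\dots,B)\bar\lambda$, and the second does \emph{not} correspond to vanishing Weyl tensor. The paper finishes by invoking \cite[Theorem~1.8]{Li25} (the cone condition already forces flat or rational homology sphere), then Wolf's classification \cite{Wolf} of simply connected compact symmetric rational homology spheres (sphere or $SU(3)/SO(3)$), and finally \cite[Example~4.5]{NPW22} to check that $SU(3)/SO(3)$ does not realize the admissible equality eigenvalue pattern. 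Your ``equality case $\Rightarrow W\equiv 0$'' shortcut skips all of this and is the main gap in the endgame.
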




\begin{theorem}\label{thm2}
Let $(M^n, g)$ be a closed Einstein manifold of dimension $n=4,5$, and let $\mathring{R}$ be the curvature operator of the second kind. If $\mathring{R}$ satisfies
\[
\mathring{R}\in \mathcal{C}\bigg(\alpha, \frac{(n-1)\big((n+2)(n+5)-(3n+8)\alpha\big)}{3\a (n+3)(n-2)} \bigg)
\]
for some real number $1\le \a \le \frac{(n+2)(n+5)}{3n+8}$, then $M$ is flat or a round sphere.

\end{theorem}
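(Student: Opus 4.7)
The plan is to adapt the Bochner technique used in \cite{CW24-1,CW25-2,FL25} (which handled integer $\alpha$) to the fractional setting, with the low-dimensional features of $n=4,5$ exploited at the final step. Throughout, let $W$ denote the Weyl tensor, viewed as a self-adjoint operator on $S_0^2(TM)$ in the standard way, and let $\lambda_1 \le \cdots \le \lambda_N$ be the eigenvalues of $\mathring{R}$ at a point, with average $\bar\lambda$ proportional to $\operatorname{Scal}$.

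\textbf{Step 1 (Integrated Bochner identity).} I would begin from the integrated Lichnerowicz identity for $W$ on a closed Einstein manifold, present in the cited works, of the form
\[
\int_M \bigl(|\nabla W|^2 - c_1(n)\,\langle \mathring{R}(W), W\rangle - c_2(n)\,\operatorname{Scal}\,|W|^2\bigr)\,dv = 0,
\]
with explicit dimensional constants $c_1(n), c_2(n)$. The goal is then to bound $\langle \mathring{R}(W), W\rangle$ from below by a suitable multiple of $\bar\lambda\,|W|^2$ so that the right-hand side becomes nonpositive.

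\textbf{Step 2 (Fractional eigenvalue inequality).} The central algebraic step is a pointwise estimate
\[
\langle \mathring{R}(W), W\rangle \ge A(n,\alpha)\,\bigg(\frac{1}{\alpha}\sum_{i=1}^{[\alpha]}\lambda_i + \frac{\alpha-[\alpha]}{\alpha}\,\lambda_{[\alpha]+1}\bigg)|W|^2 + B(n,\alpha)\,\bar\lambda\,|W|^2,
\]
with constants $A(n,\alpha), B(n,\alpha)$ tuned so that, after invoking the cone condition $\mathring{R}\in\mathcal{C}(\alpha,\theta)$, the right-hand side collapses to a single multiple of $\bar\lambda\,|W|^2$. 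I would prove this by diagonalizing $\mathring{R}$, expanding $W = \sum_i w_i E_i$ in the corresponding orthonormal basis, bounding $\sum_i \lambda_i w_i^2$ below by a convex combination of the two nearest integer truncations (this is how the fractional part $\alpha-[\alpha]$ appears), and then using the trace-free property of $W$ together with the effective rank bound on $W$ valid in dimensions $4$ and $5$, where $N=9$ and $N=14$ respectively.

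\textbf{Step 3 (Conclusion).} Substituting Step 2 into Step 1 and applying the hypothesis with the stated value $\theta=\frac{(n-1)((n+2)(n+5)-(3n+8)\alpha)}{3\alpha(n+3)(n-2)}$ should force
\[
\int_M |\nabla W|^2 \,dv \le 0,
\]
so $\nabla W \equiv 0$. A standard equality analysis, together with the Einstein condition, then forces $W \equiv 0$, possibly via a Kashiwada-type argument when $\bar\lambda = 0$. Hence $(M,g)$ has constant sectional curvature and is either flat or a round sphere.

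\textbf{Main obstacle.} The heart of the argument is Step 2 for non-integer $\alpha$. In the integer case one simply truncates at the $\alpha$-th eigenvalue, but for fractional $\alpha$ one must interpolate between two consecutive truncations in exactly the convex-combination pattern used to define $\mathcal{C}(\alpha,\theta)$ in \eqref{cone-cond}. Matching the sharp constant $\frac{(n-1)((n+2)(n+5)-(3n+8)\alpha)}{3\alpha(n+3)(n-2)}$ requires carefully tracking the marginal contribution of $\lambda_{[\alpha]+1}$ against both the trace-free constraint on $W$ and the low-dimensional identities that distinguish $n=4,5$ from the higher-dimensional regime of Theorem \ref{thm1}. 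I expect the main effort to be in optimizing this interpolation so that the coefficient in the theorem is attained exactly at the boundary $\alpha=(n+2)(n+5)/(3n+8)$, which corresponds to the already-known $\alpha$-nonnegativity rigidity.
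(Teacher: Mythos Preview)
Your outline has the right global shape (Bochner identity, curvature-term estimate, integrate, rigidity), but Step~2 as written will not produce the sharp constant, and the algebraic setup is off. The Weyl tensor $W$ is a $4$-tensor, not an element of $S_0^2(TM)$, so you cannot expand it as $W=\sum_i w_i E_i$ in the eigenbasis of $\mathring R$ and write $\langle\mathring R(W),W\rangle=\sum_i\lambda_i w_i^2$. The curvature term that actually appears in the Bochner formula is $\sum_{j=1}^N\lambda_j\,|S^jW|^2$, where $S^jW$ is the derivation action of the eigenvector $S^j\in S_0^2$ on the $4$-tensor $W$; the weights $|S^jW|^2$ are not controlled by any ``rank of $W$'' but only by the identities $\sum_j|S^jW|^2=\tfrac{4N-12}{n}|W|^2$ and $|S^jW|^2\le\tfrac{8n-16}{n}|W|^2$. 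Even if you fix this, bounding $\sum_j\lambda_j|S^jW|^2$ from below by a multiple of $\bar\lambda\,|W|^2$ (as your Step~2 suggests) yields a threshold that is essentially independent of $\alpha$, so it cannot recover the stated $\theta=\theta(n,\alpha)$.

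What is actually needed is different, and for $n=4,5$ in fact simpler than your proposed interpolation. First, the cone condition is used not via a linear bound on the Bochner term but via a lower bound on the smallest eigenvalue: one shows $\lambda_1\ge -A\bar\lambda$ with $A=\frac{(\alpha-1)N+\alpha(N-1)\theta}{N-\alpha}$ (this is where the fractional $\alpha$ enters, by an induction passing from $[\alpha]$ to $\alpha$). Second, for $n=4,5$ there is an exact identity $\langle\Delta R,R\rangle=8\sum_j\lambda_j^3+\tfrac{8(n-4)}{3}\bar\lambda\sum_j\lambda_j^2-\tfrac{4(n+2)(n-1)^2}{3}\bar\lambda^3$, so no Weyl-term estimate is needed at all. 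The sharp $\theta(n,\alpha)$ then arises from a constrained cubic optimization: after substituting $x_j=\lambda_j/\bar\lambda+A$ one minimizes a cubic $F(x)$ over $\{x_j\ge 0,\ \sum x_j=N(1+A)\}$ and finds $F\ge 0$ precisely when $\theta$ is at most the value in the theorem. Integrating $\Delta|R|^2\ge 2|\nabla R|^2$ then gives $\nabla R\equiv 0$, and one finishes by the classification of locally symmetric spaces (via \cite{NPW22}), not by arguing directly that $W\equiv 0$. Your ``equality analysis'' in Step~3 would have to reproduce this classification, which is not automatic from $\nabla W=0$ alone.
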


The corollary below follows directly from Theorems \ref{thm1} and \ref{thm2}.

\begin{corollary}\label{cor1}
Let $(M^n, g)$ be a closed Einstein manifold of dimension $n \ge 4$. Then $M$ is flat or a round sphere if one of the following conditions holds:
\begin{itemize}
    \item[(1)] If $n=4$, and $\mathring{R} \in \mathcal{C}(\alpha,\frac{27-10\alpha}{7 \alpha})$ for some $\alpha \in (2,\frac{27}{10} ]$; 
    \item[(2)] If $n=5$, and $\mathring{R} \in \mathcal{C}(\alpha,\frac{70-23\alpha}{18\alpha} )$ for some $\alpha \in (3,\frac{70}{23}]$;
    \item[(3)] If $n=6, 7$, and $\mathring{R} \in \mathcal{C}(\alpha,\theta(n,\alpha) )$ for some $\alpha \in (1,\frac{n^4-n^3+8 n-8}{3 n^3+5 n^2-22 n+8}]$;
    \item[(4)] If $n =8, 9, 10$, and $\mathring{R} \in \mathcal{C}(\alpha,\theta(n,\alpha) )$ for some $\alpha \in (2,\frac{n^4-n^3+8 n-8}{3 n^3+5 n^2-22 n+8}]$;
    \item[(5)] If $11\le n \le 16$, and $\mathring{R} \in \mathcal{C}(\alpha,\theta(n,\alpha) )$ for some $\alpha \in ([\frac{n+2}{4}],\frac{n^4-n^3+8 n-8}{3 n^3+5 n^2-22 n+8}]$;
    \item[(6)] If $n \ge 17$, and $\mathring{R} \in \mathcal{C}(\alpha,\theta(n,\alpha) )$ for some $\alpha \in ([\frac{n+2}{4}],\frac{n^2+n-8}{4n-8}]$.
\end{itemize}
Here $\theta(n,\alpha)$ denotes the explicit constant defined in \eqref{theta}.
\end{corollary}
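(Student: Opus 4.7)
The plan is to deduce each of the six cases as a direct specialization of either Theorem \ref{thm1} or Theorem \ref{thm2}; the only work is a routine algebraic check that the triple $(n,\alpha,\theta)$ specified in the corollary lies in the admissible region of the corresponding theorem.

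For cases (1) and (2), dimensions $n=4,5$ fall under Theorem \ref{thm2}. I would substitute $n=4$ and $n=5$ in turn into the general formula
\[
\frac{(n-1)\bigl((n+2)(n+5)-(3n+8)\alpha\bigr)}{3\alpha(n+3)(n-2)},
\]
verify that it simplifies to $(27-10\alpha)/(7\alpha)$ and $(70-23\alpha)/(18\alpha)$ respectively, and check that the upper endpoints $27/10$ and $70/23$ coincide with $(n+2)(n+5)/(3n+8)$ at those values. The open left endpoints $\alpha>2,\ \alpha>3$ are not required by Theorem \ref{thm2} itself; they merely single out the range in which the corollary strictly surpasses the integer-$\alpha$ results of \cite{CW25-2, DF24}.

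For cases (3)--(6), with $n\ge 6$, the conclusion reduces to Theorem \ref{thm1} with the identical function $\theta(n,\alpha)$ defined in \eqref{theta}, so the only algebraic task is to confirm that the declared $\alpha$-interval is contained in
\[
\Bigl[\,1,\ \min\Bigl\{\tfrac{n^{4}-n^{3}+8n-8}{3n^{3}+5n^{2}-22n+8},\ \tfrac{n^{2}+n-8}{4n-8}\Bigr\}\Bigr].
\]
Since both denominators are positive for $n\ge 3$, cross-multiplying reduces the comparison of the two candidates in the $\min$ to the sign of a quintic polynomial in $n$. A direct computation shows that this quintic is negative for $6\le n\le 16$ and positive for $n\ge 17$, so the first expression is the smaller one in cases (3)--(5), while the second is smaller in case (6). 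In each case the lower endpoint of the $\alpha$-interval is simply the integer threshold already handled in \cite{CW24-1, CW25-2, DF24, FL25}, so no additional argument is required there.

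The only substantive step is the sign analysis for the resulting quintic, which isolates the transition at $n=17$ and thereby explains the partition of dimensions $n\ge 6$ into cases (3)--(5) and case (6). Everything else is a direct specialization of Theorems \ref{thm1} and \ref{thm2}, with no further curvature estimates required.
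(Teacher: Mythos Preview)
Your proposal is correct and is exactly the approach the paper takes: the authors state that the corollary ``follows directly from Theorems \ref{thm1} and \ref{thm2}'' and give no further argument, so the algebraic substitutions and the $n=17$ crossover check you outline are precisely the routine verifications implicit in that statement.
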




\begin{remark}
(1) The results of Theorems \ref{thm1} and \ref{thm2} for $\alpha = 1, 2$ correspond to those in \cite{CW24-1, CW25-2}, respectively, while for integers $3 \le \alpha \le [\frac{n+2}{4}]$ they correspond to the results in \cite{FL25}. Theorem \ref{thm1}-\ref{thm2} extend these results from the positive integer case to the real case.\\
(2) 
The cone $\mathcal C(\a,\theta)$ is monotone in both parameters: it expands as $\a$ or $\theta$ increases (see, \cite[Proposition 2.8]{Li25}).
Therefore, Corollary \ref{cor1} extends the results of Dai-Fu \cite{DF24}.
\end{remark}

The paper is organized as follows:  In Section \ref{sec 2}, we give the definition of the curvature operator of the second kind and derive some identities for the curvatures on Einstein manifolds.
Section \ref{sec 3} is devoted to the proof of Theorems \ref{thm1}-\ref{thm2}.

\section{Preliminaries}\label{sec 2}

In this section, we recall some basic properties about the curvature operator of the second kind, and a Weyl tensor formula on Einstein manifolds.
For further details, we refer the reader to \cite{BK78, CV60, CaM20, CW24-1, CW25-2,  Kashiwada93, Li21, Nishikawa86}.

\subsection{Curvature Operator of the Second Kind}\label{sec 2.2}
This subsection is devoted to an introduction to the curvature operator of the second kind.

Consider an $n$-dimensional Riemannian manifold $(M^n, g)$. At a point $p \in M$, set $V = T_p M$ and let $\{e_i\}_{i=1}^n$ be an orthonormal basis of $V$. The metric $g$ allows us to identify $V$ with its dual space $V^*$. Denote by $S^2(V)$ and $\wedge^2(V)$ the spaces of symmetric two-tensors and two-forms on $V$, respectively. The space $S^2(V)$ decomposes into $O(V)$-irreducible subspaces as
$$
S^2(V) = S^2_0(V) \oplus \mathbb{R} g,
$$
where $S^2_0(V)$ is the space of traceless symmetric two-tensors, and $g = \sum_{i=1}^{n} e_i \otimes e_i$. 
Let $N$ be the dimension of $S^2_0(V)$, i.e., $N = (n+2)(n-1)/2$.
The space of symmetric two-tensors on $\wedge^2 V$, denoted by $S^2(\wedge^2 V)$, admits an orthogonal decomposition 
$$
S^2(\wedge^2 V) = S^2_B(\wedge^2 V) \oplus \wedge^4 V,
$$
where $S^2_B(\wedge^2 V)$ is the space of algebraic curvature operators on $V$, consisting of all tensors $R \in S^2(\wedge^2 V)$ that satisfy the first Bianchi identity.

For an algebraic curvature operator $R \in S^2_B(\wedge^2(T_p M))$, there are two associated self-adjoint operators (see \cite{Nishikawa86}).
The curvature operator of the first kind, denoted by $\hat{R}: \wedge^2 (T_p M) \to \wedge^2(T_p M)$, is defined as
\[
\hat{R}(\omega)_{ij}=\frac{1}{2}\sum_{k,l=1}^n R_{ijkl}\omega_{kl}.
\]
The second operator is $\overline{R}: S^2(T_p M) \to S^2(T_p M)$, given by
\[
    \overline{R}(\varphi)_{ij}=\sum_{k,l=1}^n R_{iklj}\varphi_{kl}.
\]
Following Nishikawa \cite{Nishikawa86}, the curvature operator of the second kind refers to the symmetric bilinear form
$$
    \mathring{R}: S^2_0(T_pM) \times S^2_0(T_pM) \to \mathbb{R}
$$
obtained by restricting $\overline{R}$ to the space $S^2_0(T_pM)$ of traceless symmetric two-tensors. Equivalently, as noted in \cite{NPW22}, it can be viewed as the self-adjoint operator
$$
\mathring{R} = \pi \circ \overline{R}: S^2_0(T_pM) \to  S^2_0(T_pM),
$$
where $\pi: S^2(T_pM) \to S^2_0(T_pM)$ denotes the projection.

Consider the eigenvalues $\{\lambda_j\}_{j=1}^N$ of $\mathring{R}$ and denote their average by $\bar{\lambda} = \frac{1}{N}\sum_{j=1}^N \lambda_j$. For an Einstein manifold of dimension $n$, the scalar curvature $\S$ satisfies
\[
\S = n(n-1)\bar{\lambda},
\]
as shown in \cite[Proposition 2.1]{CW24-1}.

\begin{definition}[\cite{NPW22}]
\label{def2.4}
Let $\mathcal{T}^{(0, k)}(V)$ denote the space of $(0, k)$-tensor space on $V$. For $S\in S^{2}(V)$ and $T\in\mathcal{T}^{(0, k)}(V)$, we define 
\begin{align*}
    S:\  &\mathcal{T}^{(0, k)}(V) \to \mathcal{T}^{(0, k)}(V),\\
    &(ST)({X_1}, \cdots, {X_k}) = \sum\limits_{i = 1}^k {T({X_1}, \cdots , S{X_i}, \cdots, {X_k})},
\end{align*}
and define $T^{S^{2}}\in \mathcal{T}^{(0, k)}(V)\otimes S^{2}(V)$ by
\begin{align*}
    \left\langle
    T^{S^{2}}(X_{1}, \cdots, X_{k}), S
    \right\rangle
    =(ST)\left(X_{1}, \cdots,  X_{k}\right).
\end{align*}
\end{definition}
According to the above definition, if $\{\bar S^{j}\}_{j=1}^N$
 is an orthonormal basis for $S^{2}(V)$, then 
$$T^{S^{2}}=\sum\limits_{j=1}^N\bar S^{j}T\otimes\bar S^{j}.$$
Similarly, we define $T^{S^{2}_{0}}\in \mathcal{T}^{(0, k)}(V)\otimes S^{2}_{0}(V)$ by
$$T^{S^{2}_{0}}=\sum\limits_{j=1}^N S^{j}T\otimes S^{j},$$
where $\{ S^{j}\}_{j=1}^N$ is an orthonormal basis for $S^{2}_{0}(V)$.

\subsection{A formula for Weyl tensor on Einstein manifolds}\label{sec:Weyltensor}
For any symmetric two-tensors $A, B \in S^2(V)$, the Kulkarni–Nomizu product $A \owedge B$ yields an algebraic curvature operator in $S^2_B(\wedge^2 V)$, defined by
$$
(A \owedge B)_{ijkl} = A_{ik}B_{jl}+A_{jl}B_{ik} -A_{jk}B_{il}-A_{il}B_{jk}.
$$
The Riemann curvature tensor $R$ admits a decomposition into irreducible components (see \cite[(1.79)]{CaM20}):
\begin{align}\label{2.3}
R = W + \frac{1}{n-2}\,\Ric \owedge g - \frac{\S}{2(n-1)(n-2)}\,g \owedge g,
\end{align}
where $W$ denotes the Weyl tensor and $\Ric$ the Ricci tensor.  
In an arbitrary basis, this reads
\begin{align*}
R_{ijkl}=& \; W_{ijkl}
+ \frac{1}{n-2}\bigl(R_{ik}g_{jl}+R_{jl}g_{ik}-R_{il}g_{jk}-R_{jk}g_{il}\bigr)\\
&-\frac{\S}{(n-1)(n-2)}\bigl(g_{ik}g_{jl}-g_{il}g_{jk}\bigr).
\end{align*}
If $(M^n,g)$ is Einstein, then $\Ric = \frac{\S}{n}\,g$, and \eqref{2.3} reduces to
\begin{align}\label{2.4}
R = W + \frac{\S}{2n(n-1)}\,g \owedge g.
\end{align}

\section{Proof of Theorems \ref{thm1} and \ref{thm2}}\label{sec 3}
In this section, we prove the main theorems of this paper. The key to proving Theorems \ref{thm1} and \ref{thm2} is to establish the inequality $\left\langle \Delta R, R\right\rangle\ge 0$ for the Riemann curvature tensor. Following \cite{CW24-1, CW25-2}, we begin by expressing $\left\langle \Delta R, R\right\rangle$ for Einstein manifolds under the cone condition 
\[
\alpha^{-1}\big(\sum_{i=1}^{[\alpha]}
\lambda_i+ (\alpha - [\alpha] ) \lambda_{\lfloor \alpha \rfloor + 1} \big) \ge -\theta \bar{\lambda},
\]
in terms of the curvature operator of the second kind $\mathring{R}$, where $\lambda_1 \le \lambda_2 \le \cdots \le \lambda_N$ be the eigenvalues of $\mathring{R}$ and $\bar{\lambda} = \frac{1}{N}\sum_{i=1}^N \lambda_i$ their average.



\begin{lemma}\label{lm3.1}
    Let $(M^n, g)$ be an  Einstein manifold of dimension $n\ge 6$. Assume that the cone condition \eqref{cone-cond} holds for $\theta\ge 0$ and $1 \le \alpha\le \frac{n^2+n-8}{4(n-2)}$. 
    Then
   \begin{align}\label{4.3}
    \sum_{j=1}^N
    \lambda_{j} |S^{j} W|^2 
    \ge -\frac{16(N-3)}{3n}\theta \bar\lambda \sum_{j=1}^N \lambda_j^2 
    +\frac{16N(N-3)}{3n} \theta \bar\lambda^3,
\end{align}
where $S^{j}W$ is given in Definition \ref{def2.4}.
\end{lemma}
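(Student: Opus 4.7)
The plan is to derive an explicit pointwise formula for $\sum_j \lambda_j |S^j W|^2$ on Einstein manifolds and then invoke the cone condition \eqref{cone-cond} via an Abel-type summation-by-parts argument.

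First, using the Einstein decomposition \eqref{2.4} together with the easily verified relation $S(g\owedge g)=4\,S\owedge g$ for any $S\in S^2(V)$, I would rewrite, for each traceless unit-norm eigentensor $S^j$ of $\mathring R$,
\[
S^j W \;=\; S^j R \;-\; 2\bar\lambda\, S^j\owedge g,
\]
so that
\[
|S^j W|^2 \;=\; |S^j R|^2 \;-\; 4\bar\lambda\,\langle S^j R,\, S^j\owedge g\rangle \;+\; 16(n-2)\bar\lambda^2,
\]
where I have used the identity $|S^j \owedge g|^2=4(n-2)$ valid for $|S^j|=1$ and $\tr S^j=0$.

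Second, I would evaluate the cross term $\langle S^j R, S^j\owedge g\rangle$ and the norm $|S^j R|^2$ pointwise in an orthonormal eigenbasis of $\mathring R$. The Einstein hypothesis guarantees that $\overline R$ preserves $S^2_0(V)$ with eigenequation $\overline R(S^j)=\lambda_j S^j$; combined with the Bianchi symmetries of $R$, this yields both quantities explicitly as expressions in $\lambda_j$, $\bar\lambda$, and universal scalar invariants of the Weyl tensor. Summing against $\lambda_j$ and using the moment identities $\sum_j \lambda_j=N\bar\lambda$ and $\sum_j(\lambda_j-\bar\lambda)^2=\sum_j\lambda_j^2-N\bar\lambda^2$, I would reorganize
\[
\sum_{j=1}^N \lambda_j\, |S^j W|^2
\]
as a polynomial in $\sum_j \lambda_j^p$ ($p=1,2,3$) whose shape matches the right-hand side of \eqref{4.3}.

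Third, to deduce \eqref{4.3}, I would exploit the cone condition via the Abel-type identity
\[
\sum_{j=1}^N \lambda_j\, a_j \;=\; a_N\sum_{j=1}^N \lambda_j \;+\; \sum_{j=1}^{N-1}(a_j-a_{j+1})\bigl(\lambda_1+\cdots+\lambda_j\bigr),
\]
applied to $a_j=|S^j W|^2$ after a monotone rearrangement. This reduces the estimate to lower bounds on the partial sums $\sum_{i\le k}\lambda_i$, which are exactly what $\mathring R\in\mathcal C(\alpha,\theta)$ controls at the fractional index $\alpha$ (with comparisons from the monotonicity of the cone in $\alpha$ to handle other $k$). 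The hypothesis $\alpha\le\tfrac{n^2+n-8}{4(n-2)}$ is expected to emerge precisely as the threshold guaranteeing that the weights $(a_j-a_{j+1})$ keep the sign needed for the telescoping to give the claimed bound.

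The main obstacle will be the pointwise identities for $|S^j R|^2$ and $\langle S^j R,\, S^j\owedge g\rangle$ on Einstein manifolds. The combinatorial calculation couples the basis-dependent projection
\[
\sum_{j=1}^N S^j_{ae}S^j_{a'e'}=\tfrac{1}{2}(\delta_{aa'}\delta_{ee'}+\delta_{ae'}\delta_{ea'})-\tfrac{1}{n}\delta_{ae}\delta_{a'e'}
\]
with the Weyl decomposition of $R$; carrying it out carefully is what determines the sharp constants $\tfrac{16(N-3)}{3n}$ and $\tfrac{16N(N-3)}{3n}$ appearing in \eqref{4.3}, and is the step most sensitive to the Einstein hypothesis and the traceless nature of $W$.
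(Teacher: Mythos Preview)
There is a genuine gap in Step~2. The quantities $|S^j R|^2$ and $\langle S^j R,\,S^j\owedge g\rangle$ cannot be expressed, for an individual index $j$, solely in terms of $\lambda_j$, $\bar\lambda$, and scalar invariants of $W$: they depend on how the particular eigentensor $S^j$ is aligned with the Weyl tensor. The eigenequation $\overline R(S^j)=\lambda_j S^j$ and the projection identity you quote become useful only \emph{after} summing over $j$, not termwise; consequently there is no closed formula for $\sum_j \lambda_j\,|S^j W|^2$ as a polynomial in the power sums $\sum_j\lambda_j^p$. The Abel-summation plan in Step~3 inherits this difficulty and introduces another: once you rearrange $a_j=|S^j W|^2$ monotonically, the associated eigenvalues are permuted, so the partial sums $\lambda_1+\cdots+\lambda_k$ appearing in your identity are no longer the ordered ones that the cone condition controls.

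The paper's argument sidesteps the termwise computation entirely. It uses only two universal facts from \cite{DF24}: the identity $\sum_j|S^jW|^2=\tfrac{4N-12}{n}|W|^2$ and the uniform bound $|S^jW|^2\le\tfrac{8(n-2)}{n}|W|^2$. Setting $|S^\beta W|=\max_j|S^jW|$ one rewrites
\[
\sum_j\lambda_j|S^jW|^2\;\ge\;|S^\beta W|^2\Bigl(\textstyle\sum_{j\le[\alpha]}\lambda_j+(\alpha-[\alpha])\lambda_{[\alpha]+1}\Bigr)
+\lambda_{[\alpha]+1}\Bigl(\textstyle\sum_j|S^jW|^2-\alpha|S^\beta W|^2\Bigr),
\]
then bounds the first bracket below by $-\alpha\theta\bar\lambda$ via the cone condition and uses $\lambda_{[\alpha]+1}\ge-\theta\bar\lambda$ on the second term. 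The hypothesis $\alpha\le\tfrac{n^2+n-8}{4(n-2)}$ is exactly the ratio of the sum identity to the max bound, and is what guarantees the second parenthesis is nonnegative. Finally one substitutes $|W|^2=\tfrac{4}{3}\sum_j\lambda_j^2-\tfrac{4N}{3}\bar\lambda^2$ to obtain \eqref{4.3}. Your proposed route cannot reach the inequality without these two ingredients from \cite{DF24}.
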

\begin{proof}
   Denote by  $$|S^{\b} W|=\max\limits_{1\le j \le N}|S^{j} W|.$$ 
Using $\l_1\le \l_2\le\cdots\le \l_N$, we estimate  
\begin{align}\label{3-4}
\begin{split}
      \sum_{j=1}^N
    \lambda_{j} |S^{j} W|^2
    \ge& \sum_{j=1}^{[\a]}\l_j|S^{j} W|^2+\l_{[\alpha]+1}\sum_{j={[\alpha]}+1}^N|S^{j} W|^2\\
    =&\sum_{j=1}^{[\a]}(\l_j-\l_{[\a]+1})|S^j W|^2+\l_{[\a]+1}\sum_{j=1}^{N}|S^j W|^2\\
    \ge&|S^\b W|^2\sum_{j=1}^{[\a]}(\l_i-\l_{[\a]+1})+\l_{[\a]+1}\sum_{j=1}^{N}|S^j W|^2\\
    =&|S^\b W|^2\left(\sum_{j=1}^{[\a]}\l_j+(\a-[\a])\l_{[\a]+1}\right)\\
    &+\l_{[\a]+1}\left(\sum_{j=1}^{N}|S^j W|^2-\a|S^\b W|^2\right).
\end{split}
  \end{align}
Recall from  (4.1) and (4.5) of \cite{DF24} that 
\begin{align}\label{DF-4.1}
\sum_{j=1}^{N}|S^jW|^2=\frac{2(n^2+n-8)}{n}|W|^2=\frac{4N-12}{n}|W|^2,
\end{align}
and 
\begin{align}\label{DF-4.5}
|S^jW|^2\le\frac{8n-16}{n}|W|^2, \quad 1\le j\le N.
\end{align}
\eqref{cone-cond} and \eqref{DF-4.5} imlpy that
\begin{align}\label{eq-SW1}
    |S^\b W|^2\left(\sum_{j=1}^{[\a]}\l_j+(\a-[\a])\l_{[\a]+1}\right)\ge-\a\theta\bar{\l}|S^\b W|^2\ge-\a\theta\bar\l\frac{8n-16}{n}|W|^2.
\end{align}
Through \eqref{DF-4.1}, \eqref{DF-4.5} and $\a\le\frac{n^2+n-8}{4(n-2)}$, we conclude that
\begin{align}\label{eq-SW2}
    \sum_{j=1}^{N}|S^j W|^2-\a|S^\b W|^2\ge\frac{4(N-3)-8(n-2)\a}{n}|W|^2\ge0.
\end{align}
Monotonicity of $\{\lambda_i\}_{i=1}^N$ and \eqref{cone-cond} yield $\lambda_{[\alpha]+1} \ge -\theta\bar{\lambda}$. Then, combining this with \eqref{eq-SW2}, we obtain
\begin{align}\label{eq-SW3}
    \l_{[\a]+1}(\sum_{j=1}^{N}|S^j W|^2-\a|S^\b W|^2)\ge-\theta\bar\l\frac{4(N-3)-8(n-2)\a}{n}|W|^2.
\end{align}
From \cite[Lemma 3.1]{CW25-2}, we have
\begin{align}  \label{3-7}  
|W|^2    
=\frac{4}{3} \sum_{j=1}^N \lambda_j^2    
-\frac{4N}{3}\bar\lambda^2.
\end{align}
Inequality \eqref{4.3} is a consequence of \eqref{eq-SW1}, \eqref{eq-SW3} and \eqref{3-7}.
\end{proof}

Recall from \cite[Lemma 3.3]{CW25-2} that
\begin{align*}
    3\langle \Delta R,R\rangle
  =&\sum_{j=1}^N {{\lambda _j }{{\left| {{S^j }W} \right|}^2}}
  -\frac{16N(2N-9n+6)}{3n}\bar\lambda^3\\
   &+\frac{16(2N-12n+6)}{3n} \bar\lambda  \sum_{j=1}^N \lambda _j ^2  + 16\sum_{j=1}^N {\lambda _j ^3}.
\end{align*}
Using  Lemma \ref{lm3.1}, when $n\ge 6$, we have the following estimate
\begin{equation}
\begin{aligned}\label{targetfcn}
    3\langle \Delta R,R\rangle
    \ge & \frac{16}{3n}\Big[ N(N-3)\theta-(2N-9n+6)N \Big]\bar\l^3\\
   &+ \frac{16}{3n} \Big[ (2N-12n+6)-(N-3)\theta \Big] \bar\lambda \sum_{j=1} ^{N}\lambda_{j}^2+16 \sum_{j=1}^{N}\lambda_{j}^3.
\end{aligned}  
\end{equation}

We now present a lemma that provides a lower bound estimate for $\l_1$ under the considered conditions.

\begin{lemma}\label{lm3.2}
Let $\{\l_i\}_{i=1}^N$ satisfy $\sum_{i=1}^{N}\l_i=N\bar\l$, $\l_1\le\l_2\le\cdots\le\l_N$ and the cone condition \eqref{cone-cond} for some $\theta\ge 0$ and $1\le\a<N$. Here $\bar\lambda=\frac 1N \sum_{i=1}^N \l_i$. Then
\begin{align}\label{lem1}
        \l_1\ge-\frac{(\a-1)N+\a(N-1)\theta}{N-\a}\bar\l.
\end{align}
\end{lemma}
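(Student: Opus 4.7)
The plan is to combine the cone condition with the trace identity $\sum_{i=1}^N \lambda_i = N\bar\lambda$ to isolate $\lambda_1$. First I would split off $\lambda_1$ from the left-hand side of \eqref{cone-cond}: multiplying by $\alpha$ gives
\[
\lambda_1 + T \ge -\alpha\theta\bar\lambda, \quad\text{where}\quad T := \sum_{i=2}^{[\alpha]}\lambda_i + (\alpha-[\alpha])\lambda_{[\alpha]+1}.
\]
The problem then reduces to finding a sharp upper bound on $T$ in terms of $\bar\lambda$ and $\lambda_1$.

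The key step is to view $T$ as a weighted sum $T = \sum_{i=2}^N w_i\,\lambda_i$, where $w_i = 1$ for $2 \le i \le [\alpha]$, $w_{[\alpha]+1} = \alpha-[\alpha]$, and $w_i = 0$ for $i > [\alpha]+1$. The sequence $\{w_i\}_{i=2}^N$ is non-increasing with total mass $\alpha - 1$, while $\{\lambda_i\}_{i=2}^N$ is non-decreasing by assumption; therefore Chebyshev's sum inequality, applied to these oppositely ordered sequences of length $N-1$, yields
\[
(N-1)\,T \le \Bigl(\sum_{i=2}^N w_i\Bigr)\Bigl(\sum_{i=2}^N \lambda_i\Bigr) = (\alpha - 1)(N\bar\lambda - \lambda_1).
\]
Intuitively, $T$ is weighted toward the smallest among $\lambda_2, \ldots, \lambda_N$, so its weighted average cannot exceed the overall average.

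Substituting this upper bound back into the inequality from the first step yields
\[
-\alpha\theta\bar\lambda \;\le\; \frac{N-\alpha}{N-1}\,\lambda_1 + \frac{(\alpha-1)N}{N-1}\,\bar\lambda,
\]
and since $N - \alpha > 0$, solving for $\lambda_1$ produces the claimed bound \eqref{lem1}. The only real subtlety is the middle step: one must notice that the fractional weight $\alpha - [\alpha]$ sitting on $\lambda_{[\alpha]+1}$ preserves monotonicity of the weight sequence, so that Chebyshev applies cleanly; aside from this observation the argument is purely algebraic, and one may verify the boundary cases $\alpha = 1$ (the bound collapses to $\lambda_1 \ge -\theta\bar\lambda$, matching \eqref{cone-cond}) and $\theta = 0$ as a sanity check.
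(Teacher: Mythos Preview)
Your argument is correct and noticeably cleaner than the paper's. The paper proves the lemma by induction on integer $\alpha$: the inductive step combines the trace inequality $N\bar\lambda \ge (\lambda_1+\cdots+\lambda_{k-1})+(N-k+1)\lambda_k$ with the cone condition to obtain a lower bound on $\lambda_1+\cdots+\lambda_{k-1}$, then invokes the case $\alpha=k-1$; non-integer $\alpha$ is then handled by a separate reduction to $[\alpha]$. Your use of Chebyshev's sum inequality accomplishes in a single step what the induction does one index at a time, and it treats integer and non-integer $\alpha$ uniformly since the fractional weight $\alpha-[\alpha]$ simply sits in the monotone weight sequence. The paper's approach has the minor advantage of being entirely self-contained, but your route is shorter and makes transparent why the bound is sharp: equality in Chebyshev forces $\lambda_2=\cdots=\lambda_N$, which together with equality in the cone condition recovers exactly the extremal configuration used later in Proposition~\ref{prop3.3}.
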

\begin{proof}
  We first provide a proof by induction when $\a$ is a positive integer. 
  If $\a=1$, \eqref{lem1} reduces to $\l_1\ge-\theta\bar\l$, which is obvious. Suppose the statement holds for $\a=k-1$, where $k\le N$ is a positive integer. 
It remains to prove that \eqref{lem1} holds when $\a=k$.
The non-decreasing property of $\lambda_i$ and $\sum_{i=1}^{N}\lambda_i=N\bar{\lambda}$ yield 
\begin{align}\label{lem3}
    N\bar\l\ge(\l_1+\cdots+\l_{k-1})+(N-k+1)\l_k.
\end{align}
   \eqref{cone-cond} and $k\le N$ directly imply
   \begin{align}\label{lem4}
       (N-k+1)(\l_1+\cdots+\l_{k-1})+(N-k+1)\l_k\ge-k(N-k+1)\theta\bar\l.
   \end{align}
   Summing up \eqref{lem3}-\eqref{lem4}, we see
   \begin{equation}\label{lem5}
       \l_1+\cdots+\l_{k-1}\ge-\frac{N+k(N-k+1)\theta}{N-k}\bar\l.
   \end{equation}
The induction hypothesis for $\a=k-1$ and \eqref{lem5} yields
\begin{align}\label{lem6}
    \l_1\ge-\frac{N(k-1)+k(N-1)\theta}{N-k}\bar\l.
\end{align}
This proves that the statement holds for all $\a\in\mathbb{Z}_+$. Now, we consider non-integral $\alpha$ in the interval $[1, N)$. Recall the basic inequality $[\alpha] \le \alpha < [\alpha] + 1$.
Similarly, from $\sum_{i=1}^{N}\l_i=N\bar\l$, $\l_1\le\l_2\le\cdots\le\l_N$ and \eqref{cone-cond}, we derive
\begin{align}\label{lem8}
    (\a-[\a])N\bar\l\ge(\a-[\a])(\l_1+\cdots+\l_{[\a]})+(\a-[\a])(N-[\a])\l_{[\a]+1},
\end{align}
and
\begin{align}\label{lem9}
    (N-[\a])(\l_1+\cdots+\l_{[\a]})+(\a-[\a])(N-[\a])\l_{[\a]+1}\ge-(N-[\a])\a\theta\bar\l.
\end{align}
Adding \eqref{lem8} and \eqref{lem9} gives
\begin{align*}
    \l_1+\l_2+\cdots+\l_{[\a]}\ge-\frac{(\a-[\a])N+(N-[\a])\a\theta}{N-\a}\bar\l.
\end{align*}
The desired inequality follows by applying the conclusion of induction.
\end{proof}
A key step in the proof of the main theorem lies in the following proposition, which establishes the nonnegativity of $\langle \Delta R, R\rangle$ under the given curvature constraints.

\begin{proposition}\label{prop3.3}
Let $(M^n,g)$ be an Einstein manifold of dimension $n\ge 6$ or $n=4, 5$. Suppose that $\mathring{R}$ satisfies the cone condition \eqref{cone-cond} and 
\begin{align*}
1\le \alpha \le 
\begin{cases}
\min\left\{\frac{n^4-n^3+8 n-8}{3 n^3+5 n^2-22 n+8}, \frac{n^2+n-8}{4n-8} \right\}, \quad &n \ge 6,\\
\frac{(n+2)(n+5)}{3n+8}\quad &n=4,5.
\end{cases}
\end{align*}
Then 
\begin{align}\label{k-e}
   \left\langle \Delta R, R\right\rangle\ge 0,
\end{align}
and the equality holds if and only if 
 \begin{align*}
    \l=\left(1, 1,\cdots,1\right)\bar\l \ \ \  \text{or} \ \ \
    \l=\left(-\frac{(\a-1)N+\a(N-1)\theta}{N-\a}, \frac{N+\a\theta}{N-\a}, \cdots, \frac{N+\a\theta}{N-\a}\right)\bar\l.
\end{align*}
Here, $\bar \l=\sum_{j=1}^N \l_j/N$ is the average of $\l_j$.
\end{proposition}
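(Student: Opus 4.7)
The plan is to combine the identity for $3\langle\Delta R,R\rangle$ recalled just above \eqref{targetfcn}, the Weyl estimate of Lemma \ref{lm3.1}, and a sharp cubic power sum estimate driven by Lemma \ref{lm3.2}, into a single algebraic inequality whose sign is governed by the threshold on $\theta$. First, I would apply Lemma \ref{lm3.1} to the $\sum_j\lambda_j|S^jW|^2$ term for $n\ge 6$, yielding \eqref{targetfcn}; for $n=4,5$ the same role is played by a dimension-specific Weyl tensor identity on Einstein manifolds (the self-dual/anti-self-dual decomposition in $n=4$ and its analogue in $n=5$), producing an inequality of the same shape but with the coefficients matching the formula in Theorem \ref{thm2}. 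I would then perform the change of variables $\mu_j:=\lambda_j-\bar\lambda$. Using $\sum_j\mu_j=0$, $\sum_j\lambda_j^2=\sum_j\mu_j^2+N\bar\lambda^2$ and $\sum_j\lambda_j^3=\sum_j\mu_j^3+3\bar\lambda\sum_j\mu_j^2+N\bar\lambda^3$, a direct computation shows that the $\bar\lambda^3$ coefficients cancel identically, reducing the expression to
\begin{align*}
3\langle\Delta R,R\rangle\ge\frac{16}{3n}\bigl[(2N-3n+6)-(N-3)\theta\bigr]\bar\lambda\sum_{j=1}^N\mu_j^2+16\sum_{j=1}^N\mu_j^3.
\end{align*}

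The crux is then a sharp cubic estimate. Setting $c:=\alpha(N-1)(1+\theta)/(N-\alpha)$, Lemma \ref{lm3.2} rewrites as $\mu_1\ge -c\bar\lambda$, and I would prove
\begin{align*}
\sum_{j=1}^N\mu_j^3\ge -\frac{(N-2)c}{N-1}\,\bar\lambda\sum_{j=1}^N\mu_j^2
\end{align*}
for all such $\{\mu_j\}$ with $\sum\mu_j=0$ and $\bar\lambda\ge 0$. After the rescaling $t_j:=(\mu_j+c\bar\lambda)/(Nc\bar\lambda)$, this becomes $N(N-1)p_3-(2N-1)p_2+1\ge 0$ on the simplex $\{t_j\ge 0,\ \sum_j t_j=1\}$, where $p_k=\sum_j t_j^k$; equivalently, $\sum_j t_j(Nt_j-1)((N-1)t_j-1)\ge 0$. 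I would prove this by separating into the two regimes of $p_2$: for $p_2\ge 1/(N-1)$, Cauchy--Schwarz gives $p_3\ge p_2^2$ and the inequality reduces to $[(N-1)p_2-1][Np_2-1]\ge 0$; for $p_2\in[1/N,1/(N-1)]$, a Lagrange analysis shows the sharp lower bound of $p_3$ is attained on the one-parameter family with one small $t_j$ and the remaining $N-1$ equal, leading to $(N-2)v\bigl(1-\sqrt{vN(N-1)}\bigr)\ge 0$ with $v:=p_2-1/N$. Equality occurs exactly at $p_2=1/N$ (all $t_j$ equal) or $p_2=1/(N-1)$ (one $t_j=0$, others equal), which correspond to the two $\lambda$-configurations in the statement.

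Substituting this sharp cubic bound into the reduced inequality and clearing denominators via the formula for $c$ produces
\begin{align*}
3\langle\Delta R,R\rangle\ge\frac{16\bar\lambda\sum_j\mu_j^2}{3n(N-\alpha)(N-1)}\Big\{(2N-3n+6)(N-\alpha)-3n\alpha(N-2)-\theta\bigl[(N-3)(N-\alpha)+3n\alpha(N-2)\bigr]\Big\}.
\end{align*}
The cone condition with $\theta>-1$ forces $\bar\lambda\ge 0$ (since the left-hand side of \eqref{cone-cond} is at most $\bar\lambda$), and the bracket is nonnegative precisely at $\theta\le\theta(n,\alpha)$ from \eqref{theta} (with the analogous formula for $n=4,5$). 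This proves $\langle\Delta R,R\rangle\ge 0$; the equality analysis then propagates back to the two listed $\lambda$-profiles through the tightness of Lemma \ref{lm3.2} and the saturation of the cubic inequality.

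The main obstacle is the sharpness of the cubic estimate. The naive bound $\mu_j^2(\mu_j+c\bar\lambda)\ge 0$ summed only yields $\sum_j\mu_j^3\ge -c\bar\lambda\sum_j\mu_j^2$, which propagates to $(N-1)$ in place of $(N-2)$ throughout the bracket. Since the critical $\theta$ is strictly decreasing in that factor, the naive bound strictly underestimates the admissible range and misses the threshold $\theta(n,\alpha)$; the $(N-2)/(N-1)$ refinement genuinely relies on the zero-sum constraint $\sum_j\mu_j=0$ via the simplex analysis above.
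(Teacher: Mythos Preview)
Your argument is essentially the paper's: after invoking Lemma \ref{lm3.1} and \eqref{targetfcn}, both approaches reduce to the identical simplex inequality---your $N(N-1)p_3-(2N-1)p_2+1\ge 0$ on $\{t_j\ge 0,\ \sum t_j=1\}$ is precisely the paper's $F(x)\ge 0$ under the substitution $x_j=Nc\,t_j$ with $c=A+1$---and both then read off the same threshold $\theta(n,\alpha)$ and the same two equality profiles. The only substantive differences are that the paper cites \cite[Proposition 3.4]{CW24-1} for this inequality whereas you sketch a direct two-regime proof, and for $n=4,5$ the paper invokes an exact identity for $\langle\Delta R,R\rangle$ from \cite[Section 4.2]{DF24} (not a self-dual decomposition); also note a harmless slip in your final display, where the denominator should be $3n(N-\alpha)$ rather than $3n(N-\alpha)(N-1)$.
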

\begin{proof}
We prove the case for $n\ge 6$. Let $\l=(\l_1, \l_2,\cdots,\l_N)$ and  denote the right hand side of \eqref{targetfcn} by $f(\l)$, namely
\begin{align*}
\begin{split}
    f(\l):=&\frac{16N}{3n}\Big[ (N-3)\theta-(2N-9n+6) \Big]\bar\l^3\\
   &+  \frac{16}{3n}\Big[ (2N-12n+6)-(N-3)\theta \Big] \bar\lambda \sum_{j=1} ^{N}\lambda_{j}^2+16 \sum_{j=1}^{N}\lambda_{j}^3,
\end{split}
\end{align*}
where 
\[
\theta(n,\alpha)
= \frac{3(N-n+1)(N-\alpha)}{3n\alpha(N-2)+(N-3)(N-\alpha)}-1.
\]
Using
$$
(N-3)\theta-(2N-9n+6)=-3n\left(\frac{N-2}{N-1}A-\frac{N}{N-1}\right)
$$
and
$$
(2N-12n+6)-(N-3)\theta=3n\left(\frac{N-2}{N-1}A-\frac{2N-1}{N-1}\right),
$$
we have
\begin{align}
    f(\l)=16\left[
    \sum_{j=1}^{N}\lambda_{j}^3
    +\left(\frac{N-2}{N-1}A-\frac{2N-1}{N-1}\right)\bar\l \sum_{j=1} ^{N}\lambda_{j}^2
    -N\left( \frac{N-2}{N-1}A-\frac{N}{N-1} \right)\bar\l^3
    \right],
\end{align}
where $A=\frac{(\a-1)N+\a(N-1)\theta}{N-\a}$ is the lower bound estimates obtained in Lemma \ref{lm3.2}. 
The curvature condition \eqref{theta} and $\theta>-1$ show that $\bar \l\ge 0$. If $\bar \l=0$, then $\l_1=\cdots=\l_N=0$, and \eqref{k-e} holds trivially due to \eqref{targetfcn}. For the case $\bar \l>0$, let
 \begin{align*}
   x_j=\l_j/\bar\l+A,\quad  x=(x_1, \cdots, x_N),
 \end{align*}
then $\sum_{j=1}^N x_j =N(1+A)$ and $x_j \ge 0$. 
This allows us to express $f(\l)$ as
\begin{align*}
f(\l)=16 \bar\l^3 F(x),
\end{align*}
where
\begin{align*}
F(x)=\sum_{i=1}^{N}x_i^3-(3-\frac{N-2}{N-1})(1+A)\sum_{i=1}^Nx_i^2+\frac{(A+1)^3 N^2}{N-1}.
\end{align*}

Now we seek the minimal points of $F(x)$ with constraints
\[
\sum_{j=1}^N x_j =N(1+A), \quad x_j \ge 0.
\]
Following the same method as in \cite[Proposition 3.4]{CW24-1}, we have $F\ge 0$, and the minimizers being
\[
x=(1,\cdots,1) \quad \text{and} \quad x=(0,\frac{N(1+A)}{N-1},\cdots,\frac{N(1+A)}{N-1}).
\]
Since the relaxed constraints lies within the feasible region of the original problem, then it is indeed an optimal point for the original problem.
For the case $n=4,5$, according to \cite[Section 4.2]{DF24}, $\langle \Delta R,R\rangle$ can be expressed explicitly in terms of the curvature operator of the second kind as:
\[
\langle \Delta R,R\rangle= 8 \sum_{j=1}^N \l_j^3 +\frac{8(n-4)}{3}\bar\l \sum_{j=1}^N \l_j^2 -\frac{4(n+2)(n-1)^2}{3}\bar\l^3.
\]
By an argument analogous to that for $n\ge 6$, one can establish that Proposition \ref{prop3.3} holds for $n=4,5$.
\end{proof}

Now we prove our main theorems.

\begin{proof}[Proof of Theorem \ref{thm1}]
By \eqref{k-e}, we have
\begin{align*}
     \Delta |R|^2
    =2|\nabla R|^2 + 2\langle \Delta R,R \rangle \ge 2|\nabla R|^2.
\end{align*}
Integrating this inequality over $M$ yields
\begin{align*}
    0=\int_M  \Delta |R|^2\, d\mu_g \ge\int_M 2|\nabla R|^2 \, d\mu_g \ge 0,
\end{align*}
which implies $|\nabla R|=0$. Therefore, $M$ is a symmetric space. \eqref{theta} and \cite[Theorem 1.8]{Li25} imply that $M$ is either flat or a rational homology sphere. Compact symmetric spaces that are rational homology spheres were classified completely by Wolf \cite[Theorem 1]{Wolf}. Apart from spheres, the only simply connected example is $SU(3)/SO(3)$. Proposition \ref{prop3.3} implies that the eigenvalues of $\mathring{R}$ are either $\left(1, 1,\cdots,1\right)\bar\l$ or $\left(-\frac{(\a-1)N+\a(N-1)\theta}{N-\a}, \frac{N+\a\theta}{N-\a}, \cdots, \frac{N+\a\theta}{N-\a}\right)\bar\l$. Combining this with \cite[Example 4.5]{NPW22}, we know that $M$ is a round sphere if $\bar \l>0$.
\end{proof}

\begin{proof}[Proof of Theorem \ref{thm2}]
Analogous to the proof of Theorem \ref{thm1}, Theorem \ref{thm2} is a direct consequence of Proposition \ref{prop3.3} and \cite[Theorem B(b)]{NPW22}. 
\end{proof}

\section*{Acknowledgments}
The first author expresses her gratitude to her advisor, Professor Ying Zhang, for lots of encouragement and helpful suggestions.  The research of this paper is partially supported by  NSF of Jiangsu Province Grant No.BK20231309.

\bibliographystyle{plain}
\bibliography{ref}

\begin{thebibliography}{10}

\bibitem{BK78}
Jean-Pierre Bourguignon and Hermann Karcher.
\newblock Curvature operators: pinching estimates and geometric examples.
\newblock {\em Ann. Sci. \'{E}cole Norm. Sup. (4)}, 11(1):71--92, 1978.

\bibitem{CV60}
Eugenio Calabi and Edoardo Vesentini.
\newblock On compact, locally symmetric {K}\"{a}hler manifolds.
\newblock {\em Ann. of Math. (2)}, 71:472--507, 1960.

\bibitem{CGT23}
Xiaodong Cao, Matthew~J. Gursky, and Hung Tran.
\newblock Curvature of the second kind and a conjecture of {N}ishikawa.
\newblock {\em Comment. Math. Helv.}, 98(1):195--216, 2023.

\bibitem{CaM20}
Giovanni Catino and Paolo Mastrolia.
\newblock {\em A perspective on canonical {R}iemannian metrics}, volume 336 of
  {\em Progress in Mathematics}.
\newblock Birkh\"{a}user/Springer, Cham, [2020] \copyright 2020.

\bibitem{CW24-1}
Haiqing Cheng and Kui Wang.
\newblock Einstein manifolds of negative lower bounds on curvature operator of
  the second kind.
\newblock {\em Math. Z.(to appear), arXiv:2411.13912}, 2024.

\bibitem{CW25-2}
Haiqing Cheng and Kui Wang.
\newblock Einstein manifolds under cone conditions for the curvature operator
  of the second kind.
\newblock {\em J. Geom. Anal.}, 35(12):Paper No. 389, 17, 2025.

\bibitem{DF24}
Zhi-Lin Dai and Hai-Ping Fu.
\newblock Einstein manifolds and curvature operator of the second kind.
\newblock {\em Calc. Var. Partial Differential Equations}, 63(2):Paper No. 53,
  22, 2024.

\bibitem{FL25}
Haiping Fu and Yao Lu.
\newblock New rigidity theorem of {E}instein manifolds and curvature operator
  of the second kind.
\newblock {\em arXiv:2512.21496}, 2025.

\bibitem{Kashiwada93}
Toyoko Kashiwada.
\newblock On the curvature operator of the second kind.
\newblock {\em Natur. Sci. Rep. Ochanomizu Univ.}, 44(2):69--73, 1993.

\bibitem{Li22JGA}
Xiaolong Li.
\newblock Manifolds with {$4\frac 12$}-positive curvature operator of the
  second kind.
\newblock {\em J. Geom. Anal.}, 32(11):Paper No. 281, 14, 2022.

\bibitem{Li22Kahler}
Xiaolong Li.
\newblock K\"{a}hler manifolds and the curvature operator of the second kind.
\newblock {\em Math. Z.}, 303(4):Paper No. 101, 26, 2023.

\bibitem{Li22PAMS}
Xiaolong Li.
\newblock K\"{a}hler surfaces with six-positive curvature operator of the
  second kind.
\newblock {\em Proc. Amer. Math. Soc.}, 151(11):4909--4922, 2023.

\bibitem{Li21}
Xiaolong Li.
\newblock Manifolds with nonnegative curvature operator of the second kind.
\newblock {\em Commun. Contemp. Math.}, 26(3):Paper No. 2350003, 26, 2024.

\bibitem{Li22product}
Xiaolong Li.
\newblock Product manifolds and the curvature operator of the second kind.
\newblock {\em Pacific J. Math.}, 332(1):167--193, 2024.

\bibitem{Li25}
Xiaolong Li.
\newblock New sphere theorems under curvature operator of the second kind.
\newblock {\em J. Lond. Math. Soc. (2)}, 112(5):Paper No. e70356, 2025.

\bibitem{NPW22}
Jan Nienhaus, Peter Petersen, and Matthias Wink.
\newblock Betti numbers and the curvature operator of the second kind.
\newblock {\em J. Lond. Math. Soc. (2)}, 108(4):1642--1668, 2023.

\bibitem{NPWW22}
Jan Nienhaus, Peter Petersen, Matthias Wink, and William Wylie.
\newblock Holonomy restrictions from the curvature operator of the second kind.
\newblock {\em Differential Geom. Appl.}, 88:Paper No. 102010, 9, 2023.

\bibitem{Nishikawa86}
Seiki Nishikawa.
\newblock On deformation of {R}iemannian metrics and manifolds with positive
  curvature operator.
\newblock In {\em Curvature and topology of {R}iemannian manifolds ({K}atata,
  1985)}, volume 1201 of {\em Lecture Notes in Math.}, pages 202--211.
  Springer, Berlin, 1986.

\bibitem{Wolf}
Joseph Wolf.
\newblock Symmetric spaces which are real cohomology spheres.
\newblock {\em J. Differential Geometry}, 3:59--68, 1969.

\end{thebibliography}
\end{document}